\documentclass[8pt]{article}
\usepackage{indentfirst,latexsym,bm}
\usepackage{amsfonts}
\usepackage{amssymb}
\usepackage{times}
\usepackage[leqno]{amsmath}
\usepackage{dsfont}
\usepackage{amsthm}
\usepackage[all]{xy}
\usepackage{hyperref}
\usepackage{color,xcolor}
\begin{document}
\title{Group-theoretical property of some integral non-degenerate fusion categories}
\author{Zhiqiang Yu}
\date{}
\maketitle

\newtheorem{theo}{Theorem}[section]
\newtheorem{prop}[theo]{Proposition}
\newtheorem{lemm}[theo]{Lemma}
\newtheorem{coro}[theo]{Corollary}
\theoremstyle{definition}
\newtheorem{conj}[theo]{Conjecture}
\newtheorem{defi}[theo]{Definition}
\newtheorem{exam}[theo]{Example}
\newtheorem{ques}[theo]{Question}
\newtheorem{rema}[theo]{Remark}

\newcommand{\A }{\mathcal{A}}
\newcommand{\C }{\mathcal{C}}
\newcommand{\B }{\mathcal{B}}
\newcommand{\D }{\mathcal{D}}
\newcommand{\E }{\mathcal{E}}
\newcommand{\K}{\mathds{k}}
\newcommand{\I }{\mathcal{I}}
\newcommand{\M }{\mathcal{M}}
\newcommand{\Q }{\mathcal{O}}
\newcommand{\Rep}{\text{Rep}}
\newcommand{\Y }{\mathcal{Z}}
\newcommand{\Z }{\mathbb{Z}}

\abstract
We show that an integral non-degenerate fusion category $\C$ is
group-theoretical if the Frobenius-Perron dimensions of its simple objects are either 1 or powers of a prime $p$.
\section{Introduction}
Throughout this paper, we work over an algebraically closed field $\K$  of characteristic zero, we denote $\K^*:=\K\backslash {\{0}\}$.

A fusion category is a semisimple $\K$-linear finite abelian tensor category. A fusion category $\C$ is   a pointed fusion category  if it is tensor equivalent to $\text{Vec}_G^\omega$, the category of $G$-graded finite-dimensional vector spaces over $\K$,  where $G$ is a finite group and $\omega\in Z^3(G,\K^*)$ is a $3$-cocycle, we note that the $3$-cocycle $\omega$ gives the associativity constraint of $\text{Vec}_G^\omega$. Recall that a fusion category $\C$ is said to be group-theoretical if $\C$ is (categorically) Morita equivalent to a pointed fusion category $\text{Vec}_G^\omega$ via an indecomposable $\C$-module category, see \cite{EGNO,ENO3} for the  precise definitions of module category and  Morita equivalence of fusion categories. Equivalently, a fusion category $\C$ is   group-theoretical if and only if there exists a braided tensor equivalence between their Drinfeld centers $\Y(\C)\cong\Y(\text{Vec}_G^\omega)$ \cite[Theorem 1.3]{ENO3}.

Since the property of being group-theoretical for fusion categories is invariant under taking Drinfeld centers and fusion subcategories,  fusion subcategories of $\Y(\text{Vec}_G^\omega)$ are always group-theoretical. In \cite{NNW}, the authors studied explicitly fusion subcategories of $\Y(\text{Vec}_G^\omega)$ in terms of subgroups of $G$, characters  and cocycles. Moreover, it was shown that a braided fusion category $\C$ is group-theoretical if and only if $\C$ is tensor equivalent to an equivariantization of some pointed fusion category \cite[Theorem 7.2]{NNW}, for the definition of equivariantization of fusion category see \cite{DrGNO2} for details.

A braided fusion category $\C$ is a fusion category   equipped with a braiding
\begin{align*}
c_{X,Y}:X\otimes Y\overset{\sim}{\to} Y\otimes X, ~\forall X,Y\in\C,
 \end{align*}
which satisfies the hexagon axioms, see \cite{EGNO}. For any fusion subcategory $\D$ of a braided fusion category $\C$, the centralizer $\D_\C'$ of $\D$ in $\C$ is generated by objects $X\in\C$ that centralize every object of $\D$. That is,
 $\D_\C'$  is generated by objects  $X\in\C$   which satisfy
\begin{align*}
c_{Y,X}c_{X,Y}=\text{id}_{X\otimes Y},~\forall Y\in\Q(\D),
\end{align*}
 where $\Q(\D)$ is the set of equivalence classes of simple objects of $\D$. And we call the fusion subcategory $\C_\C'$ the M\"{u}ger center of $\C$ and it will be denoted by $\C'$. We say a braided fusion category $\C$ is non-degenerate if its M\"{u}ger center $\C'=\text{Vec}$, where $\text{Vec}$ is the category of finite-dimensional vector spaces over $\K$.

It was proved in \cite[Theorem 1.2]{Na3} that an integral non-degenerate fusion category $\C$ is group-theoretical  if all simple objects of $\C$ have Frobenius-Perron dimensions $1$ or $2$.
In this paper, we generalize the  conclusion  of  \cite[Theorem 1.2]{Na3} to an arbitrary prime $p$. More precisely, let $p$ be a prime,  if all simple objects of an integral non-degenerate fusion category $\C$   have Frobenius-Perron dimensions $1$ or powers of $p$, then   $\C$ is   group-theoretical, see Theorem \ref{maintheorem} and Corollary \ref{CoroPowerP}.  The proofs of Theorem \ref{maintheorem} and Corollary \ref{CoroPowerP} are based on   \cite{CGPW,ENO2,Kir,Michler,Na3}, it is also  an analogue  of that of \cite[Theorem 4.5]{OY}.

The organization of this paper is  as follows. In section \ref{Preliminaries}, we recall some basic properties of fusion categories and braided fusion categories that we use throughout. In section \ref{mainresult},   we prove our main result (Theorem \ref{maintheorem} and Corollary \ref{CoroPowerP}) about the group-theoretical property of some integral non-degenerate fusion categories.

\section{Preliminaries} \label{Preliminaries}
In this section, we will recall some important definitions and properties about fusion categories and braided fusion categories, we refer the reader to \cite{DrGNO2,EGNO,ENO1,ENO3,GN} for more details.

Given a $\K$-linear finite semisimple abelian category $\C$,   we denote by $\Q(\C)$ the set of isomorphism classes of simple objects of $\C$, the  cardinal of $\Q(\C)$ is called the rank of $\C$ and it will be  denoted by $\text{rank}(\C)$.

Let $\C$ be a fusion category, then there exists a unique ring homomorphism  FPdim(-)  from the Grothendieck ring $\text{Gr}(\C)$ to $\K$ such that $\text{FPdim}(X)$ is a positive algebraic integer for all objects $X\in\Q(\C)$ \cite[Theorem 8.6]{ENO1}, and $\text{FPdim}(X)$ is called the Frobenius-Perron dimension of object $X$. Moreover, we define the Frobenius-Perron dimension $\text{FPdim}(\C)$ of a fusion category $\C$  by
\begin{align*}
\text{FPdim}(\C):=\sum_{X\in\Q(\C)}\text{FPdim}(X)^2.
\end{align*}
A fusion category $\C$ is integral if $\text{FPdim}(X)\in\Z$, $\forall X\in\Q(\C)$.

A simple object $X$ of a fusion category $\C$ is invertible if $X\otimes X^*=X^*\otimes X=I$, the unit object of $\C$, if and only if $\text{FPdim}(X)=1$. It is easy to see that the set of isomorphism classes of invertible objects of $\C$ form a finite group, which will be  denoted by $G(\C)$ below. A fusion category $\C$ is pointed if it is tensor equivalent to the fusion category $\text{Vec}_G^\omega$ of $G$-graded finite-dimensional vector spaces over $\K$ \cite{ENO1}, where $G$ is a finite group and $\omega\in Z^3(G,\K^*)$ is a $3$-cocycle. We denote by $\C_\text{pt}$ the maximal pointed fusion subcategory of any fusion category $\C$ below, that is, $\C_\text{pt}$ is the fusion subcategory generated by invertible objects of $\C$, so $G(\C)=\Q(\C_\text{pt})$.

Let $G$ be a finite group.  $\C$ is a $G$-graded fusion category, if $\C=\oplus_{g\in G}\C_g$ is a direct sum of abelian subcategories, and for arbitrary elements $g,h\in G$ we have that \begin{align*}
\C_g\otimes\C_h\subseteq\C_{gh}, ~(\C_g)^*\subseteq\C_{g^{-1}}.
 \end{align*}
 By definition, the trivial component $\C_e$ is a fusion subcategory of $\C$.
The $G$-grading of a fusion category $\C=\oplus_{g\in G}\C_g$ is faithful  if $\C_g\neq0$ for all $g\in G$, in this special situation,  we also say that $\C$ is a $G$-extension of $\C_e$. Given a faithfully $G$-graded fusion category $\C=\oplus_{g\in G}\C_g$,
\begin{align*}
\text{FPdim}(\C_g)=\text{FPdim}(\C_e)~ \text{ for any $g\in G$}
\end{align*}
and  $\text{FPdim}(\C)=|G|\text{FPdim}(\C_e)$ by \cite[Proposition 8.20]{ENO1}, where we denote
\begin{align*}
\text{FPdim}(\C_g):= \sum_{X\in\Q(\C_g)}\text{FPdim}(X)^2.
\end{align*}

Let  $\C_\text{ad}$ be the  adjoint fusion subcategory of $\C$, that is, $\C_\text{ad}$  is  generated by simple objects $Y$ such that $Y\subseteq X\otimes X^*$ for some simple object $X\in\C$. And, for any fusion category $\C$, there is a faithful grading $\C=\oplus_{g\in U_\C}\C_g$  with $\C_\text{ad}$  being the trivial component \cite[Corollary 3.7]{GN}, this grading is called the universal grading of $\C$, we denote by $U_\C$  the universal grading group of $\C$. Moreover, for any other faithful grading of $\C=\oplus_{g\in G}\C_g$, there is a surjective group homomorphism from $U_\C$ to $G$.

A fusion category $\C$ is nilpotent, if there exists  a natural number $n$ such that $\C^{(n)}=\text{Vec}$, where $\C^{(0)}:=\C$, $\C^{(1)}:=\C_\text{ad}$, $\C^{(m)}:=(\C^{(m-1)})_\text{ad}$, for all integers $m\geq1$ \cite{GN}. Equivalently, $\C$ is nilpotent if there is a sequence of fusion subcategories
$\text{Vec}=\C_0\subseteq\C_1\subseteq\cdots\subseteq\C_n=\C$ and finite groups $G_i$ such that $\C_i$ is obtained as a $G_i$-extension of $\C_{i-1}$, $1\leq i\leq n$. If these finite groups $G_i$ ($1\leq i\leq n$) are cyclic groups, then we say that $\C$ is a cyclically nilpotent fusion category, see \cite{ENO3}.

By definition, for any pointed fusion category $\C$, $\C_\text{ad}=\text{Vec}$, so pointed fusion categories are  always nilpotent; and    \cite [Theorem 8.28]{ENO1} shows that fusion categories of prime power Frobenius-Perron dimensions are cyclically nilpotent.  A fusion category $\C$ is weakly group-theoretical if it is (categorically) Morita equivalent to a   nilpotent fusion category,   furthermore, a fusion category $\C$ is called  solvable if  it is  Morita equivalent to a cyclically nilpotent fusion category, see \cite{EGNO,ENO3}.

Let $\C$ be a  braided fusion category with a braiding $c$, we denote by $\C^\text{rev}$ the braided  fusion category with reverse braiding $c^\text{rev}$, where $\C^\text{rev}:=\C$ as fusion category, and
\begin{align*}
 c^\text{rev}_{X,Y}:=c_{Y,X}^{-1}, ~\forall X,Y\in\C^\text{rev}.
  \end{align*}

A braided fusion category  $\C$ is symmetric   if $\C'=\C$; in addition, a symmetric fusion category $\C$ is Tannakian if $\C\cong \text{Rep}(G)$, where $G$ is a finite group, and the braiding $c$ of $\text{Rep}(G)$ is given as the ordinary reflection of linear vector spaces, i.e.,
\begin{align*}
c_{X,Y}(x\otimes y)=y\otimes x,   ~x\in X, y\in Y,~\forall X,Y\in\Q(\text{Rep}(G)).
\end{align*}
When $\C'=\text{Vec}$, we call  $\C$  a non-degenerate fusion category \cite{DrGNO2}. A braided fusion category $\C$ is slightly degenerate if $\C'=\text{sVec}$, the symmetric fusion category of finite-dimensional super-vector spaces over $\K$.

Let $G$ be a finite group. Let  $\C$ be a  fusion category with $G$ acted on $\C$ by tensor auto-equivalences. Then there is a well-define fusion category $\C^G$, which is called the equivariantization of $\C$ by $G$ \cite{DrGNO2,EGNO}. Objects of $\C^G$ are the   $G$-equivariant objects $X$ of $\C$, that is, there is a natural isomorphism  $\mu_g:g(X)\overset{\sim}{\to} X$ satisfying  the following equation
\begin{align*}
\mu_{gh}\circ \nu_{g,h}^X=\mu_g\circ g(\mu_h)
\end{align*}
where $\nu_{g,h}$ is the natural isomorphism associated to the $G$-action on $\C$. Moreover, it is shown that $\text{FPdim}(\C^G)=\text{FPdim}(\C)|G|$ \cite[Proposition 4.26]{DrGNO2}, and the simple objects and fusion rules of $\C^G$ were studied explicitly in \cite{BuNa}.

Given a braided fusion category $\C$, let $\E=\text{Rep}(G)$ be a maximal Tannakian subcategory of $\C$. Then the de-equivariantization $\C_G$ admits a $G$-grading structure, making $\C_G$  a $G$-crossed braided  fusion category \cite{DrGNO2,EGNO,Kir}. Moreover, the  trivial component $\C_G^0$ of the $G$-grading of $\C_G$ is a  braided fusion subcategory, and $\E'_G:=(\E_\C')_G\cong\C_G^0$ as  braided fusion category \cite[Proposition 4.56]{DrGNO2}.  The braided fusion category   $\C_G^0$ is called the core of $\C$ \cite[Definition 5.8]{DrGNO2},  it was shown that the core of a braided fusion category $\C$ is independent of the choice of the maximal Tannakian subcategory $\E$ \cite[Theorem 5.9]{DrGNO2}.
See \cite[Sections 4 $\&$ 5]{DrGNO2} and the references therein for details.

Conversely, given a   braided  fusion category $\D$ with a specific group homomorphism $\rho: G\to \text{Aut}_\otimes^\text{br}(\D)$, where $\text{Aut}_\otimes^\text{br}(\D)$ is the group of braided auto-equivalences of $\D$. In order to  obtain a braided fusion category $\C$  such that $\C_G^0\cong \D$ as braided fusion category, one   need to show the existence of a $G$-crossed braided fusion category $\A$ such that  $\A_0\cong\D$, where $\A_0$ is the trivial component of the $G$-grading of $\A$ by \cite[Theorem 7.12]{ENO2}.  By using the  Brauer-Picard groups and Picard groups of fusion categories developed in \cite{ENO2}, there are two obstructions $o_3(\rho)$ and $o_4(\rho)$, which are characterized explicitly using cohomological data, to the existence of a desired braided  $G$-crossed braided fusion category $\A$.  Therefore, one have to show the vanishing of these two obstructions $o_3(\rho)$ and $o_4(\rho)$, see \cite[Sections 7 $\&$ 8]{ENO2}  for a more complete description.

In addition, if $\C$ is  a non-degenerate fusion category, then the above inverse procedure of taking the core of $\C$ is called gauging, which is well-known and useful in physics, see  \cite{CGPW,Mu} and the references therein, for example.

A finite abelian group $G$ is called a metric group $(G,\eta)$ if  there is  a non-degenerate quadratic form $\eta$  on $G$. More precisely, there is a morphism $\eta:G\to\K^*$ satisfying  $\eta(g^{-1})=\eta(g)$, and $\eta$ defines a non-degenerate bi-character $\beta$ on $G$, where
\begin{align*}
\beta(g,h):=\frac{\eta(gh)}{\eta(g)\eta(h)}, ~\forall g,h\in G.
 \end{align*}
 It is well-known that there is a bijective correspondence between pointed non-degenerate fusion categories  and metric groups,
 see \cite[subsection 2.11]{DrGNO2}. In this paper, we will use $\C(G,\eta)$ to denote the non-degenerate pointed fusion category determined by the metric group $(G,\eta)$.
\section{Main result}\label{mainresult}
In this section, we always assume  $p$  is a prime. We prove that an integral non-degenerate fusion category $\C$ is  group-theoretical  if all simple objects of $\C$ have   Frobenius-Perron dimensions $1$ or  $p$.

We begin with the following proposition. The proof   of the proposition is easy, we include it for the reader's convenience.
\begin{prop}\label{tannakiandim}Let $\C$ be an integral non-degenerate but not pointed fusion category. If all simple objects of $\C$ have Frobenius-Perron dimensions $1$ or  powers of $p$, then  $p|\text{FPdim}(\E)$, where $\E$ is an arbitrary maximal Tannakian subcategory of $\C$.
\end{prop}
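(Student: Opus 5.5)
Since a subcategory of $\E\cong\text{Rep}(G)$ has $\text{FPdim}(\E)=|G|$, I will argue by contradiction: assume $p\nmid \text{FPdim}(\E)$ and show that $\C$ must then be pointed.

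\textbf{Step 1: the dimension of $\C$.} Because $\C$ is not pointed, it has a simple object $X$ with $\text{FPdim}(X)=p^a$ for some $a\geq1$. In a non-degenerate (hence modular, after choosing the canonical spherical structure on an integral category) fusion category, $\text{FPdim}(X)^2$ divides $\text{FPdim}(\C)$. Therefore $p^2\mid\text{FPdim}(\C)$.

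\textbf{Step 2: the dimension of the core.} Put $G$ for the group with $\E\cong\text{Rep}(G)$, and $\D=\C_G^0=(\E'_\C)_G$ for the core. Non-degeneracy of $\C$ gives
\begin{align*}
\text{FPdim}(\E'_\C)=\text{FPdim}(\C)/|G|, \qquad \text{FPdim}(\D)=\text{FPdim}(\C)/|G|^2.
\end{align*}
Under the assumption $p\nmid|G|$, the $p$-part of $\text{FPdim}(\D)$ equals that of $\text{FPdim}(\C)$, so it is at least $p^2$.

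\textbf{Step 3: the core is pointed.} By maximality of $\E$, the core $\D$ is non-degenerate and contains no nontrivial Tannakian subcategory. I now look at how simple objects of $\E'_\C$ decompose under the free module functor $F:\E'_\C\to\D$. For a simple $Y$ of dimension $p^b$, $F(Y)$ is a sum of the $G$-conjugates of some simple object $Z$, each with multiplicity $m$. This gives
\begin{align*}
p^b = m\,[G:\text{Stab}(Z)]\,\text{FPdim}(Z),
\end{align*}
where $m$ and $[G:\text{Stab}(Z)]$ both divide $|G|$ and so are prime to $p$. Hence every simple object of $\D$ has dimension $1$ or a power of $p$, and $\text{FPdim}(Z)$ is the full $p$-power $p^b$.

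It remains to show that a non-degenerate integral braided category with no nontrivial Tannakian subcategory and with these dimensions is pointed. I will try two routes.

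\begin{itemize}
\item The first route is to cite the known result that an integral non-degenerate category whose simple dimensions are powers of one prime is solvable (via a prime power/Burnside-type theorem, e.g.\ ENO). A solvable non-degenerate category that is not pointed contains a nontrivial Tannakian subcategory, by the result of Drinfeld--Gelaki--Nikshych--Ostrik that such a category has a nontrivial invertible object, combined with their analysis of $\D_{pt}$. Then anisotropic cores are pointed.
\item The second route, if citation is awkward, is to use that $\D_{\text{pt}}$ is a non-degenerate anisotropic pointed category. Its centralizer $(\D_{\text{pt}})'$ has no invertibles other than $I$, yet $\D=\D_{\text{pt}}\boxtimes(\D_{\text{pt}})'$.
\end{itemize}

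\textbf{Step 4: contradiction.} Once $\D$ is pointed, $\C$ is a $G$-equivariantization of a $G$-crossed extension of $\D$. Every simple object of $\C$ then has dimension dividing $|G|\cdot 1$, which is prime to $p$. This contradicts $\text{FPdim}(X)=p^a$ with $a\geq 1$. Hence $p\mid\text{FPdim}(\E)$.

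\textbf{Main obstacle.} The hardest step is Step 3, showing the core is pointed. I expect to rely on the solvability of $\C$ (a Burnside-type theorem for categories with prime-power dimensions), or alternatively on the anisotropic argument in the second route.
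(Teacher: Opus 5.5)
Your Step 4 does not go through. From the core $\D=\D_e$ being pointed you conclude that every simple object of $\C$ has dimension dividing $|G|$. That would require the whole $G$-crossed category $\C_G=\bigoplus_{g\in G}\D_g$ to be pointed, but only its trivial component is. A $G$-extension of a pointed category can have non-invertible simple objects in its non-trivial components: Ising over $\text{sVec}$, or Tambara--Yamagami categories over $\text{Vec}_A$ (e.g.\ $A=\Z_p\times\Z_p$, where the extra simple object has dimension $p$). Nothing in your argument excludes this. By your own Step 2, $\text{FPdim}(\D_g)=\text{FPdim}(\D_e)$ is divisible by $p^2$, so some $\D_g$ with $g\neq e$ may contain simple objects of dimension $p^a$. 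These give simple objects of $\C$ outside $\E'_\C$ whose dimension is divisible by $p$. What a pointed core actually yields, via your orbit formula in Step 3, is only that $\E'_\C$ is pointed. Closing the argument needs one more idea. For example, non-degeneracy gives $\C_{\text{ad}}=(\C_{\text{pt}})'\subseteq(\E'_\C)'=\E$, so $\C_{\text{ad}}$ is pointed of dimension dividing $|G|$. Then for a simple $X$ with $\text{FPdim}(X)=p^a$, $a\geq 1$, the object $X\otimes X^*$ is a sum of $|G(X)|=p^{2a}$ invertible objects of $\C_{\text{ad}}$, contradicting $p\nmid|G|$.

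Step 3 is not really proved either, for three reasons.
\begin{itemize}
\item The solvability input is not ENO's Burnside theorem, which concerns $\text{FPdim}(\C)=p^aq^b$. It is Natale's solvability theorem for braided categories whose simple objects have prime-power dimension (the paper's [Na2, Theorem 7.2]).
\item The claim ``solvable, non-degenerate and not pointed implies a nontrivial Tannakian subcategory'' needs integrality; the Ising category is a counterexample otherwise.
\item Maximality of $\E$ only excludes Tannakian subcategories of the core that are stable under the $G$-action, which is why DGNO only prove the core is \emph{weakly} anisotropic. So you need the result the paper quotes as [Na3, Theorem 1.1], not merely the existence of some Tannakian subcategory.
\end{itemize}
Route 2 assumes without proof that $\D_{\text{pt}}$ is non-degenerate, and it stops before showing $(\D_{\text{pt}})'=\text{Vec}$.

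In both places the missing ingredient is the elementary count on which the paper builds its entire proof. For a non-invertible simple $X$, $\text{FPdim}(X)^2=|G(X)|+pb$ with $G(X)\subseteq G(\C_{\text{ad}})$. Hence $p$ divides the order of the pointed symmetric category $(\C_{\text{ad}})_{\text{pt}}=\C_{\text{ad}}\cap\C_{\text{pt}}$. The paper gets $\Rep(\Z_p)\subseteq\C$ directly from this. When $p=2$ and the $2$-part is exactly $2$, a fermion is ruled out by M\"{u}ger's lemma that the fermion of a slightly degenerate category fixes no simple object. The paper then enlarges $\Rep(\Z_p)$ to a maximal Tannakian subcategory and uses that all maximal Tannakian subcategories have the same dimension. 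No core and no solvability theorem are needed.
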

\begin{proof}We first show that $\C$ contain a Tannakian category $\text{Rep}(\Z_p)$. Since $\C$ is not pointed, $\C_\text{ad}\ncong \text{Vec}$. For any non-invertible simple object $X$ of $\C$, $X\otimes X^*\in\C_\text{ad}$ by definition, notice that \begin{align*}
X\otimes X^*=\oplus_{g\in G(X)}g\oplus \oplus_Z N_{X,X^*}^ZZ,
 \end{align*}
 where $G(X)=\{g|g\otimes X=X, g\in G(\C)\}$ is a subgroup of $G(\C_\text{ad})$ and $Z$ ranges over the set of isomorphism classes of non-invertible simple objects of $\C_\text{ad}$.  Since non-invertible simple objects of $\C$ have Frobenius-Perron dimensions   power of $p$,
\begin{align*}
\text{FPdim}(X)^2=|G(X)|+pb, \text{where $b$ is a non-negative integer,}
\end{align*}
we see $G(X)$ is non-trivial $p$-group, so  $p$ divides $\text{FPdim}((\C_\text{ad})_\text{pt})$.

Assume that $\text{FPdim}((\C_\text{ad})_\text{pt})=p^mr$, since $(\C_\text{ad})_\text{pt}$  is a symmetric fusion subcategory by  \cite[Corollary 6.8]{GN}, $(\C_\text{ad})_\text{pt}$ contains a non-trivial Tannakian category if $p$  is odd or $m\geq2$ \cite[Corollary 2.50]{DrGNO2}.
On the contrary, assume that  $m=1$ and $p=2$, and that $(\C_\text{ad})_\text{pt}$ contains a symmetric fusion category $\B:=\text{sVec}=\langle g\rangle$, where the simple object $g$ generates  $\B$.
Then   $\B_\C'$  is a slightly degenerate fusion subcategory of $\C$ \cite[Corollary 3.11]{DrGNO2}. Moreover, $\B_\C'$ is not pointed, otherwise, $\B_\C'\cong\text{sVec}\boxtimes\D$ \cite[Proposition 2.6]{ENO3}, where $\D$ is a pointed non-degenerate fusion category. We  then arrive to a contradiction, because $\C\cong\D\boxtimes\D_\C'$ with $\D_\C'$ being an integral fusion category of Frobenius-Perron dimension $4$, so $\C$ must be pointed.

By assumption,   $g\subseteq X\otimes X^*$ for all non-invertible simple objects $X$ of $\B_\C'$, that is, $g$ must fixes all   non-invertible simple objects of $\B_\C'$, but this contradicts to \cite[Lemma 5.4]{Mu}. Thus,  $\C$ contains a Tannakian subcategory $\E_1\cong \text{Rep}(\Z_p)$.
Let $\E$ be an arbitrary maximal Tannakian subcategory of $\C$ such that $\E_1\subseteq\E$, then   $p$ divides $\text{FPdim}(\E)$ \cite[Proposition 8.15]{ENO1}. Therefore, $p$ divides the Frobenius-Perron dimension of any maximal Tannakian subcategory of $\C$, as maximal Tannakian subcategory of   $\C$ have same the Frobenius-Perron dimension \cite[Theorem 5.9]{DrGNO2}.
\end{proof}
Given two fusion subcategories $\A,\B$ of a  fusion category $\C$, we denote by  $\A\vee\B$ the fusion subcategory of $\C$ generated by $\A$ and $\B$. More precisely, $\A\vee\B$ is the smallest fusion subcategory of $\C$ that contains  both $\A$ and $\B$.

Let  $G$ be a finite group. Given a faithfully $G$-graded fusion category $\C=\oplus_{g\in G}\C_g$, assume that the trivial component $\C_e$ is pointed and $G(\C_e)=N$. Then the $G$-grading structure of $\C$ induces a natural group action of $N$ on the set $\Q(\C_g)$ for all $g\in G$, i.e., $(h,X)\mapsto h\otimes X$  for all $h\in N$ and $X\in\Q(\C_g)$. One can easily prove that the group $N$ acts transitively on the sets $\Q(\C_g)$ for all $g\in G$, see  \cite[Lemma 4.1]{OY} for example. In particular, the simple objects of the component $\C_g$ all have same the Frobenius-Perron dimension, $\forall g\in G$.

We first give our main  theorem in a  special situation. The proof of the following theorem is similar to  that of \cite[Theorem 4.2]{OY}.  Recall that a braided fusion category $\C$ is weakly anisotropic if $\C$ does not contain a non-trivial Tannakian fusion subcategory that is invariant under all braided auto-equivalences of $\C$ \cite[Definition 5.16]{DrGNO2}.
\begin{theo}\label{tannpowerp}Let  $\C$ be an integral non-degenerate fusion category such that $\text{FPdim}(X)$ is a power  of $p$   for all objects $ X\in\Q(\C)$. If all Tannakian subcategories of $\C$  have Frobenius-Perron dimensions powers of $p$, then $\C$ is  nilpotent and  group-theoretical.
\end{theo}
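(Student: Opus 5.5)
We argue by looking at the core of $\C$. If $\C$ is pointed, then $\C=\C(G,\eta)$ is nilpotent and group-theoretical, and there is nothing to prove. So assume $\C$ is not pointed. Let $\E=\Rep(G)$ be a maximal Tannakian subcategory of $\C$. By hypothesis $\text{FPdim}(\E)=|G|$ is a power of $p$, and by Proposition \ref{tannakiandim} it is a nontrivial power. Hence $G$ is a nontrivial $p$-group.

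We then pass to the de-equivariantization $\C_G$. It is a $G$-crossed braided fusion category with trivial component the core $\D:=\C_G^0\cong(\E_\C')_G$. By \cite{DrGNO2}, $\D$ is non-degenerate and weakly anisotropic. It is also integral with simple objects of $p$-power dimension. To see this, recall that for a simple $X\in\E'_\C$ the free-module image $F(X)$ decomposes as $m\bigoplus_{i}Y_i$, where the $Y_i$ run over a $G$-orbit of length $[G:\text{Stab}]$, a power of $p$. Since $\text{FPdim}(F(X))=\text{FPdim}(X)$ is a power of $p$ and all the $Y_i$ have the same dimension, each $\text{FPdim}(Y_i)$ divides a power of $p$.

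The key step is to show that $\D$ is pointed. Suppose not. Then the argument in the proof of Proposition \ref{tannakiandim} applies to $\D$: the subcategory $(\D_\text{ad})_\text{pt}$ is symmetric and has dimension divisible by $p$, and the case $p=2$ with $(\D_\text{ad})_\text{pt}\cong\text{sVec}$ is excluded by \cite[Lemma 5.4]{Mu}. The subcategory $(\D_\text{ad})_\text{pt}$ is canonical, hence invariant under all braided auto-equivalences of $\D$.
\begin{itemize}
\item If $p$ is odd, $(\D_\text{ad})_\text{pt}$ is itself Tannakian.
\item If $p=2$, its maximal Tannakian subcategory is canonical, has index at most $2$, and is nontrivial.
\end{itemize}
In both cases we get a nontrivial Tannakian subcategory of $\D$ invariant under all braided auto-equivalences, contradicting weak anisotropy. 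Hence $\D$ is pointed. Consequently $\C_G$ is a faithful $G$-extension, with $G$ a $p$-group, of a pointed category, so $\C_G$ is nilpotent.

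To get group-theoreticity, I will deduce nilpotency of the braided category $\C$ and then use the Sylow decomposition of \cite{DrGNO1}. For nilpotency, one route is to show $\C_\text{ad}\subseteq\E'_\C$ and iterate via the grading by the $p$-group $G$. Another is to quote the criterion of \cite{DrGNO1} that $\C$ is nilpotent when $\E\subseteq\C$ is Tannakian with $G$ nilpotent and $(\E'_\C)_G$ nilpotent (here pointed). Once $\C$ is nilpotent and braided, \cite{DrGNO1} gives $\C\cong\boxtimes_q\C_q$, where each $\C_q$ is a non-degenerate braided category of $q$-power dimension. For $q\neq p$, the simple objects of $\C_q$ have dimension both a $p$-power and dividing a $q$-power, so $\C_q$ is pointed and hence group-theoretical. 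The factor $\C_p$ is integral, non-degenerate, of prime-power dimension, hence group-theoretical by \cite{DrGNO1}. Since Deligne products of group-theoretical categories are group-theoretical, $\C$ is group-theoretical.

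The main obstacle is the nilpotency transfer from $\C_G$ back to $\C=(\C_G)^G$. Equivariantization does not obviously preserve nilpotency, so I would rely on the braided nilpotency criteria of \cite{DrGNO1} rather than general fusion-category arguments. A secondary point is verifying that $\D$ inherits the $p$-power dimension condition and the canonicity needed for the weak-anisotropy contradiction.
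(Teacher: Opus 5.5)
Your argument that the core $\C_G^0$ is pointed is sound. It is a more elementary substitute for the paper's appeal to Natale's Theorem 1.1 on weakly anisotropic categories. One fix is needed: for odd $p$ you have not shown that $((\C_G^0)_\text{ad})_\text{pt}$ has odd order, so do not claim it is Tannakian. Use its maximal Tannakian subcategory instead, namely the kernel of $g\mapsto c_{g,g}\in\{\pm1\}$; it has index at most $2$ and contains the nontrivial $p$-part, as in your $p=2$ case.

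\textbf{The gap: nilpotency of $\C$.} This is the step you flag yourself, and neither of your routes closes it.

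In route (a), non-degeneracy gives $(\C_\text{ad})'_\C=\C_\text{pt}$. So $\C_\text{ad}\subseteq\E'_\C$ is equivalent to $\E\subseteq\C_\text{pt}$, i.e.\ to $G$ being abelian, which you have not proved. Even granting it, $\E'_\C\cong(\C_G^0)^G$ is an equivariantization of a pointed category by a $p$-group, and such a category need not be nilpotent (e.g.\ $\text{Vec}_{\Z_3}^{\Z_2}\cong\Rep(S_3)$).

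Route (b) relies on a criterion that does not hold. In $\Y(\text{Vec}_{S_3})$ take $\E=\langle\text{sgn}\rangle\cong\Rep(\Z_2)$. Then $(\E'_\C)_{\Z_2}$ has dimension $9$, hence is nilpotent, yet $\Y(\text{Vec}_{S_3})\supseteq\Rep(S_3)$ is not nilpotent. Requiring $\E$ maximal does not help. Gauging an anisotropic $\C(\Z_3^2,q)$ by $v\mapsto -v$ gives an integral non-degenerate category with maximal Tannakian subcategory $\Rep(\Z_2)$ and pointed core. Its subcategory $\E'_\C$ has the fusion rules of $\Rep(\Z_3^2\rtimes\Z_2)$, which is not nilpotent.

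What both routes lack is control of how $G$ acts on the part of the core whose order is prime to $p$.

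\textbf{The missing idea.} Controlling that action is the key step of the paper's proof, which never transfers nilpotency through equivariantization.

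Write the pointed core as $\C(N,\varphi)\boxtimes\C(H_1,\eta_1)\boxtimes\cdots\boxtimes\C(H_s,\eta_s)$, with $|N|$ a power of $p$ and $|H_j|=q_j^{n_j}$. Let $\D=\C_G$ and take a component $\D_g$. The pointed trivial component acts transitively on $\Q(\D_g)$, so all simples of $\D_g$ share one $p$-power dimension. Then $\text{FPdim}(\D_g)=\text{FPdim}(\D_e)$ forces $\prod_j q_j^{n_j}\mid\text{rank}(\D_g)$.

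By Kirillov's Lemma 10.7, $\text{rank}(\D_g)$ is the order of the subgroup of $g$-fixed points in $N\times\prod_j H_j$. Hence every $g\in G$ fixes each $H_j$ pointwise. By coprimality, $\C(H_j,\eta_j)^G\cong\C(H_j,\eta_j)\boxtimes\Rep(G)$, so each $\C(H_j,\eta_j)$ is a non-degenerate subcategory of $\C$.

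M\"uger's theorem then splits $\C\cong\A\boxtimes\C(H_1,\eta_1)\boxtimes\cdots\boxtimes\C(H_s,\eta_s)$ with $\text{FPdim}(\A)=p^m$. Nilpotency follows automatically from prime-power dimension, and group-theoreticity follows from the result of Drinfeld--Gelaki--Nikshych--Ostrik. So the paper obtains the prime decomposition directly, rather than deriving it from a nilpotency statement your argument could not establish.
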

\begin{proof}We assume that $\C$ is not pointed below, otherwise there is nothing to prove.  By Proposition \ref{tannakiandim},  $\C$ contains a non-trivial Tannakian subcategory. Indeed, the existence of non-trivial Tannakian subcategory  can also be deduced from  \cite[Theorem 3.1]{Na3}, as $\C$ is always a solvable fusion category \cite[Theorem 7.2]{Na2}, by
\cite[Theorem 3.1]{Na3} $\C$ contains a non-trivial Tannakian subcategory.

Let $\E=\text{Rep}(G)$ be a maximal Tannakian subcategory of $\C$, and $\D:=\C_G=\oplus_{g\in G}\D_g$ with $\D_e=\C_G^0$ being the trivial component of the $G$-grading. Then the core $\C_G^0$ of $\C$ is a weakly anisotropic fusion category by \cite[Corollary 5.19]{DrGNO2},  hence \cite[Theorem 1.1]{Na3} says  that $\C_G^0$ is a pointed non-degenerate fusion category. Notice that the Frobenius-Perron dimensions of  simple objects of $\D$ also belong to  the set ${\{p^i|i\geq0}\}$ by \cite[Proposition 4.26]{DrGNO2}.

Assume $\text{FPdim}(\C)=p^mq^{n_1}_1\cdots q^{n_s}_s$, where $q_j\neq p$ are distinct primes and $n_j$ are positive integers for all $j$, $1\leq j\leq s$. Since $|G|=p^t$ for some positive integer $t$ by assumption,
we deduce from \cite[Proposition 4.56]{DrGNO2} that $\text{FPdim}(\C_G^0)=p^{m-2t}q^{n_1}_1\cdots q^{n_s}_s$. Therefore, we have a braided tensor equivalence of non-degenerate pointed fusion categories
\begin{align*}
\C_G^0\cong\C(N,\varphi)\boxtimes\C(H_1,\eta_1)\boxtimes\cdots\boxtimes\C(H_s,\eta_s),
\end{align*}
where $(N,\varphi)$ and $(H_j,\eta_j)$ are metric groups of orders $p^{m-2t}$ and $q^{n_j}_j$, $1\leq  j\leq s$, respectively. Clearly, as $G$ acts on $\C(N,\varphi)$ and $\C(H_j,\eta_j)$ as braided auto-equivalences, these fusion subcategories   are stable under  the action of $G$, therefore we obtain braided fusion subcategories $\C(N,\varphi)^G$ and $\C(H_j,\eta_j)^G$ of $\E_\C'\cong(\C_G^0)^G$, hence the generated fusion category $\C(N,\varphi)^G\vee\C(H_1,\eta_1)^G\vee\cdots\vee\C(H_s,\eta_s)^G$ is a braided fusion subcategories of $\E_\C'$.

By \cite[Proposition 4.26]{DrGNO2}, for all $1\leq j\leq  s$, we have \begin{align*}
\text{FPdim}(\C(N,\varphi)^G)=|G||N|,~\text{FPdim}(\C(H_j,\eta_j)^G)=|G||H_j|.
 \end{align*}
By \cite[Proposition 8.15]{ENO1}, it is easy to see  that   $\C(N,\varphi)^G\vee\C(H_1,\eta_1)^G\vee\cdots\vee\C(H_s,\eta_s)^G$ and $\E_\C'$  have the same Frobenius-Perron dimensions, so
\begin{align*}
\E_\C'=\C(N,\varphi)^G\vee\C(H_1,\eta_1)^G\vee\cdots\vee\C(H_s,\eta_s)^G.
\end{align*}

The universal grading property   implies that $\D_\text{ad}\subseteq\D_e$  \cite[Corollary 3.10]{GN}, so $\D$ is nilpotent. Then for an arbitrary element $g\in G$,    \cite[Lemma 4.1]{OY} says that $q^{n_1}_1\cdots q^{n_s}_s$ divides $\text{rank}(\D_g)$.  Meanwhile for any $g\in G$, $\text{rank}(\D_g)=|\Q(\D_e)^g|$ by
\cite[Lemma 10.7]{Kir}, where $\Q(\D_e)^g\subseteq\Q(\D_e)$ is the set of isomorphism classes of simple objects of $\D_e$ that are fixed by the braided auto-equivalence $g$. Notice that $\Q(\D_e)^g$ is an abelian group since $\D_e$  is a pointed non-degenerate fusion category, thus $G$ acts trivially on the non-degenerate the fusion category $\C(H_j,\eta_j)$ for all $j$. Hence, for all $1\leq j\leq s$, since $(|H_j|,|G|)=1$, we see that
 \begin{align*}
 \C(H_j,\eta_j)^G\cong\C(H_j,\eta_j)\boxtimes \text{Rep}(G)
 \end{align*} as braided fusion categories. Consequently, we have a braided tensor equivalence
\begin{align*}
\C\cong\A\boxtimes\C(H_1,\eta_1)\boxtimes\cdots\boxtimes\C(H_s,\eta_s)
\end{align*}
by \cite[Theorem 3.13]{DrGNO2}, where $\A$ is a non-degenerate fusion category of Frobenius-Perron dimension $p^m$. Then $\C$ is nilpotent and group-theoretical by \cite[Theorem 6.10]{DrGNO1}, as desired.
\end{proof}

In fact, Theorem \ref{tannpowerp} implies the following corollary:
\begin{coro}\label{generalcoro}Let $\C$ be an integral non-degenerate weakly group-theoretical fusion category. Assume that $\E=\text{Rep}(G)$ is  a maximal Tannakian subcategory of $\C$. If there is a prime $q$ satisfies that $(q,|G|)=1$ and $(q, \text{FPdim}(X))=1$ for any  simple object  $X\in\Q(\C)$, then we have a braided equivalence  $\C\cong\C(H,\eta)\boxtimes\C(H,\eta)_\C'$, where $(H,\eta)$ is a metric group of order $q^m$  and $m$ is the maximal non-negative integer such that $q^m$ divides $\text{FPdim}(\C)$.
\end{coro}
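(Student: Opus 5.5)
We follow the proof of Theorem \ref{tannpowerp}, with the prime $q$ in place of the primes $q_j$. Let $\D:=\C_G=\oplus_{g\in G}\D_g$ be the de-equivariantization by the maximal Tannakian subcategory $\E=\text{Rep}(G)$. Its trivial component is the core $\D_e=\C_G^0$, which is weakly anisotropic by \cite[Corollary 5.19]{DrGNO2}. Since $\C$ is weakly group-theoretical, so is $\C_G^0$, and \cite[Theorem 1.1]{Na3} then shows that $\C_G^0$ is pointed non-degenerate. Write $\text{FPdim}(\C)=q^m r$ with $(q,r)=1$. By \cite[Proposition 4.56]{DrGNO2}, $\text{FPdim}(\C_G^0)=\text{FPdim}(\C)/|G|^2$. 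Because $(q,|G|)=1$, the $q$-part of this number is still $q^m$. Hence the metric group of $\C_G^0$ has a Sylow $q$-part $(H,\eta)$ of order $q^m$, and we get $\C_G^0\cong\C(H,\eta)\boxtimes\C(K,\varphi)$ with $(|K|,q)=1$. This subcategory $\C(H,\eta)$ is characteristic, so it is stable under the $G$-action.

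The key step, which is also the main obstacle, is to show that $G$ acts trivially on the simple objects of $\C(H,\eta)$. We plan to use rank counting exactly as before. Fix $g\in G$. By \cite[Lemma 10.7]{Kir}, $\text{rank}(\D_g)=|\Q(\D_e)^g|$, and this number is the order of the fixed subgroup $H^g\times K^g$. The component $\D_g$ is a $\D_e$-module with $\D_e$ pointed, so by \cite[Lemma 4.1]{OY} the group $G(\D_e)$ acts transitively on $\Q(\D_g)$. This gives $\text{rank}(\D_g)\cdot\text{FPdim}(X_g)^2=\text{FPdim}(\D_e)$ for any simple $X_g\in\D_g$. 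The simple objects of $\D$ have dimensions dividing dimensions of simple objects of $\C$ (via \cite[Proposition 4.26]{DrGNO2}, or because they are constituents of images of simple objects of $\C$ under the free-module functor). So $\text{FPdim}(X_g)$ is prime to $q$, which forces $q^m\mid\text{rank}(\D_g)=|H^g||K^g|$. Since $(|K|,q)=1$, this gives $|H^g|=q^m$, i.e. $g$ fixes every simple object of $\C(H,\eta)$. The point needing the most care is justifying that $\text{FPdim}(X_g)$ is prime to $q$, which should follow from the hypothesis on $\C$ together with $(q,|G|)=1$.

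Next we need that $\C(H,\eta)^G\cong\C(H,\eta)\boxtimes\text{Rep}(G)$ as braided fusion categories, so that $\C(H,\eta)$ embeds in $\C$. Once $G$ fixes all simple objects, the action on the pointed category $\C(H,\eta)$ is determined by cohomological data that are valued in groups of order a power of $q$ and are indexed by $G$. Since $(|H|,|G|)=1$, these data are trivial; this is the argument already used in the proof of Theorem \ref{tannpowerp}. The action is therefore equivalent to the trivial action, and the equivariantization splits as claimed. This yields a braided embedding $\C(H,\eta)\hookrightarrow\E_\C'\subseteq\C$. Since $\C(H,\eta)$ is non-degenerate, M\"uger's decomposition \cite[Theorem 3.13]{DrGNO2} gives $\C\cong\C(H,\eta)\boxtimes\C(H,\eta)_\C'$, with $|H|=q^m$ as required.
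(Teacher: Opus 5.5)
Your proposal is correct and follows essentially the same route as the paper's sketched proof. The core $\C_G^0$ is pointed non-degenerate by \cite[Theorem 1.1]{Na3}, and its Sylow $q$-part $\C(H,\eta)$ has order $q^m$ because $\text{FPdim}(\C_G^0)=\text{FPdim}(\C)/|G|^2$. The rank count from \cite[Lemma 10.7]{Kir} and \cite[Lemma 4.1]{OY} shows that $G$ fixes $\C(H,\eta)$ pointwise. Coprimality then gives $\C(H,\eta)^G\cong\C(H,\eta)\boxtimes\Rep(G)$, and \cite[Theorem 3.13]{DrGNO2} finishes.

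The point you flagged, that simple objects of $\C_G$ have dimensions prime to $q$, is handled in the paper by \cite[Corollary 2.2]{BuNa}. That result gives $\text{FPdim}(S_{Y,\pi})=[G:G_Y]\dim(\pi)\,\text{FPdim}(Y)$, so this step does not use $(q,|G|)=1$. Your direct application of M\"uger's theorem to the embedded $\C(H,\eta)$ is only a slightly more streamlined ending than the paper's description of $\E_\C'$ as a join.
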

\begin{proof}(\textit{sketched}) Let $m$ be the maximal non-negative integer such that $q^m$ divides $\text{FPdim}(\C)$. If $m=0$, this is trivial; we assume $m>0$ below.

Since $\C$ is an integral weakly group-theoretical fusion category,  we know that $\D:=\C_G^0$ is a pointed non-degenerate fusion category \cite[Theorem 1.1]{Na3}. Note that the Frobenius-Perron dimensions of the simple objects of $\C$ (and also for $\C_G$ by \cite[Corollary 2.2]{BuNa}) are coprime to $q$ and $(q,|G|)=1$ by assumption, $q^m$ divides $\text{FPdim}(\D)$ as $\text{FPdim}(\D)=\frac{\text{FPdim}(\C)}{|G|^2}$ by \cite[Proposition 4.56]{DrGNO2},  $\D\cong\C(H,\eta)\boxtimes\C(H,\eta)_\D'$ as braided fusion category, where $\C(H,\eta)$ is a pointed non-degenerate fusion category of Frobenius-Perron dimension $q^m$. For the $G$-crossed braided fusion category $\C_G$,
 we know that $G$ acts trivially on  $\C(H,\eta)$ as braided auto-equivalences \cite[Lemma 10.7]{Kir},  a  similar  argument of   Theorem \ref{tannpowerp} shows that $\C(H,\eta)^G\cong\C(H,\eta)\boxtimes\Rep(G)$ as braided fusion categories. Meanwhile $\E_\C'\cong\C(H,\eta)^G\vee(\C(H,\eta)_\D')^G$ as braided fusion category,
 then the conclusion follows by \cite[Theorem 3.13]{DrGNO2}.
\end{proof}

We are now in a position to prove  our main theorem about group-theoretical property of integral non-degenerate fusion categories, which strengthens the conclusion  of
 \cite[Theorem 1.2]{Na3}.
\begin{theo}\label{maintheorem}Let $\C$ be a non-degenerate   fusion category. If $\text{FPdim}(X)\in{\{1,p}\}$ for all simple objects $X\in\Q(\C)$, then $\C$ is  a group-theoretical fusion category.
\end{theo}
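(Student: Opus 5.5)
We may assume $\C$ is not pointed; otherwise $\C$ is already group-theoretical. Then $\C$ is integral, and every simple object has dimension $1$ or $p$. Such categories are solvable by \cite[Theorem 7.2]{Na2}, hence weakly group-theoretical. Fix a maximal Tannakian subcategory $\E=\text{Rep}(G)$. By Proposition \ref{tannakiandim}, $p$ divides $|G|$. The strategy is to split off the primes different from $p$, using Corollary \ref{generalcoro}, so that the problem reduces to a category to which Theorem \ref{tannpowerp} (or a variant of it) applies.

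\textbf{Step 1: splitting off primes not dividing $|G|$.} Write $\text{FPdim}(\C)=p^mq_1^{n_1}\cdots q_s^{n_s}$. Every simple dimension is $1$ or $p$, so it is coprime to each $q_j$. Whenever $q_j\nmid |G|$, Corollary \ref{generalcoro} gives $\C\cong\C(H_j,\eta_j)\boxtimes\C(H_j,\eta_j)'_\C$, with the first factor pointed non-degenerate. Iterating over all such $q_j$, we reduce to the case where every prime dividing $\text{FPdim}(\C)$ also divides $|G|$. Pointed factors and Deligne products of group-theoretical categories are group-theoretical, so this reduction is harmless.

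\textbf{Step 2: the structure of $G$.} Here I would use the equivariantization description $\E'_\C\cong(\C_G^0)^G$ together with the faithful $G$-grading of $\C_G$. By \cite[Theorem 1.1]{Na3}, the core $\C_G^0$ is pointed. By \cite[Lemma 10.7]{Kir}, the rank of each graded component $\D_g$ equals the number of fixed points of $g$ on $\Q(\D_e)$. The simple objects of $\C$ have dimension $1$ or $p$. By \cite{BuNa}, a simple object of $\E'_\C=(\C_G^0)^G$ has dimension $[G:\text{Stab}]\cdot\dim(\text{proj.\ irrep of Stab})$. Since $\text{Rep}(G)\subseteq\E'_\C$, in particular all irreducible characters of $G$ have degree $1$ or $p$. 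At this point I would invoke the classification of groups whose character degrees are $1$ or powers of $p$ (the Isaacs--Passman/Michler type results; cf.\ \cite{Michler}). Such a $G$ has a normal abelian $p$-complement, so $G=P\ltimes A$ with $A$ an abelian $p'$-group and $P$ a Sylow $p$-subgroup. The goal is to show that $A$ is trivial, or else that the $A$-part can be handled separately. Passing to the de-equivariantization by the Tannakian subcategory $\text{Rep}(G/A)$, or equivariantizing in stages, should reduce us to the case where all Tannakian subcategories have $p$-power dimension.

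\textbf{Step 3: finishing.} Once $G$ is a $p$-group, the argument of Theorem \ref{tannpowerp} applies essentially verbatim; that argument only used that $|G|$ is a $p$-power and that the core is pointed. It yields $\C\cong\A\boxtimes\C(H_1,\eta_1)\boxtimes\cdots$ with $\text{FPdim}(\A)=p^m$. Then $\A$ is group-theoretical by \cite[Theorem 6.10]{DrGNO1}, and hence so is $\C$.

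\textbf{Main obstacle.} The hard step is Step 2, excluding or controlling a nontrivial $p'$-part $A$ of $G$. My first attempt would be a counting argument with the fixed points from \cite[Lemma 10.7]{Kir}. An element $a\in A$ of prime order $q\neq p$ acts on the pointed core. Elements of $\D_a$ have dimension dividing the $p$-part, which is forced by the dimension constraint and \cite[Corollary 2.2]{BuNa}. Then $\text{FPdim}(\D_a)=\text{FPdim}(\D_e)$ should force $a$ to act with enough fixed points that $\C(H,\eta)^{\langle a\rangle}$ splits off $\text{Rep}(\langle a\rangle)$. This would contradict the maximality of $\E$, or would allow the $A$-part to be absorbed, as in \cite{OY}. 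If this fails, the fallback is to use the gauging obstruction theory of \cite{ENO2,CGPW}: one would realize $\C$ directly as a gauging of a pointed category by a group with a normal abelian $p$-complement, and then show that the resulting category is Morita equivalent to a pointed one.
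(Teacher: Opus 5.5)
There is a genuine gap, and it sits exactly where this theorem goes beyond what is already proved. The case of $|G|$ a power of $p$ is Theorem~\ref{tannpowerp}, and primes $q\nmid|G|$ are handled by Corollary~\ref{generalcoro}. What remains is the case where the normal abelian $p'$-subgroup $A$ in your decomposition $G=P\ltimes A$ is nontrivial, and there your Step~2 only says what ``should'' happen.

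The targets you aim at also cannot be reached:
\begin{itemize}
\item You cannot show $A=1$, nor reach a contradiction with maximality of $\E$. Take a prime $q\neq p$ and $K$ extraspecial of order $p^3$. Then $\Y(\text{Vec}_K)\boxtimes\Y(\text{Vec}_{\Z_q})$ satisfies all hypotheses and its maximal Tannakian subcategories have dimension $p^3q$. So it survives your Step~1 and has $A\cong\Z_q$.
\item De-equivariantizing by $\Rep(G/A)\cong\Rep(P)$ does not land you in Theorem~\ref{tannpowerp}. The trivial component of $\C_P$ is non-degenerate with maximal Tannakian subcategory $\Rep(A)$, whose dimension is prime to $p$.
\item The fixed-point count, done correctly, shows that every $g\in G$ fixes the $p'$-part $H_2$ of the pointed core pointwise, because $|\Q(\D_e)^g|=\text{FPdim}(\D_e)/d_g^2$ with $d_g$ a power of $p$. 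The orbit formula of \cite{BuNa} further shows that $A$ fixes every simple object of the core. But this gives no contradiction, and it does not let you split $\Rep(A)$ off $\C(H_2,\eta_2)^A$. Indeed $|A|$ and $|H_2|$ may share primes, and a braided action that is trivial on objects is trivial only up to a class in $H^2(A,\widehat{H_2})$.
\end{itemize}

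The missing idea is not to eliminate $A$, but to prove that the whole $p'$-part of $\C$ is a pointed Deligne factor. The paper does this by gauging in two stages, pairing each stage with the part of the core of coprime order.

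Write the core as $\D_e=\C(H_1,\eta_1)\boxtimes\C(H_2,\eta_2)$ with $|H_1|$ a power of $p$ and $p\nmid|H_2|$. Realize $\C\cong\widetilde{\B}^{G_1}$ with $\B=\widetilde{\D_e}^{G_2}$ non-degenerate, where $G_2$ is the $p'$-part and $G_1$ the $p$-part of $G$. The paper gets $G$ abelian from the universal grading group of $\D$. Your Ito--Michler decomposition serves equally well, with $A$ in the role of $G_2$ and $P$ in that of $G_1$: take $\B$ to be the trivial component of $\C_P$ and $\widetilde{\D_e}=\B_A$, as in Corollary~\ref{CoroPowerP}.

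\textbf{First stage.} $G_2$ fixes every simple object of the core. Its orbits have size dividing $|G_2|$, and by \cite{BuNa} that size is also a factor of a simple dimension in $\{1,p\}$. Since $(|G_2|,|H_1|)=1$, the braided action on $\C(H_1,\eta_1)$ is then trivial. Hence $\C(H_1,\eta_1)\boxtimes\Rep(G_2)\subseteq\B$, and $\B\cong\C(H_1,\eta_1)\boxtimes\A$ with $\text{FPdim}(\A)=n$, the $p'$-part of $\text{FPdim}(\C)$. This is where the dimension hypothesis is decisive: $\text{FPdim}(\A)/\text{FPdim}(X)^2$ is an algebraic integer \cite[Theorem 2.11]{ENO3} and $\text{FPdim}(X)\in\{1,p\}$, so $\A$ is pointed.

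\textbf{Second stage.} Let $x\in G_1$. Since $\B$ is pointed, the simple objects of $\widetilde{\B}_x$ share a dimension $d_x\in\{1,p\}$. By \cite[Lemma 10.7]{Kir}, the fixed-point subgroup of $x$ on $\Q(\B)$ has order $\text{FPdim}(\B)/d_x^2$, which is divisible by $n=|\Q(\A)|$. Hence $x$ fixes $\Q(\A)$ pointwise. As $(n,|G_1|)=1$, this gives $\A^{G_1}\cong\A\boxtimes\Rep(G_1)\subseteq\C$.

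By \cite[Theorem 3.13]{DrGNO2}, $\C\cong\A\boxtimes\A'_\C$ with $\text{FPdim}(\A'_\C)=p^m$, and this factor is group-theoretical by \cite{DrGNO1}. This splitting also makes your Steps~1 and~3 unnecessary.
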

\begin{proof} We know that $\C$ contains a non-trivial Tannakian subcategory by Proposition \ref{tannakiandim}. Let $\E=\text{Rep}(G)$ be a maximal Tannakian subcategory of $\C$, and $\D:=\C_G=\oplus_{g\in G}\D_g$ with $\D_e=\C_G^0$ being the trivial component. On the one hand, if $\D$ is pointed, then $\C$ is group-theoretical by \cite[Theorem 7.2]{NNW}. On the other hand, if all Tannakian subcategories of $\C$ have Frobenius-Perron dimensions of powers of $p$, then this is  exactly the result of Theorem \ref{tannpowerp}.

We assume that  $\D$ is not pointed and $|G|=p^jt$ below, where $t>1$ is an integer with $(p,t)=1$, we also have  $j\geq1 $ by Proposition \ref{tannakiandim}.  The argument of  Theorem \ref{tannpowerp} says $\D$ is a nilpotent fusion category. Consequently, simple objects of $\D$ also have Frobenius-Perron dimensions $1$ or $p$ by \cite[Corollary 2.2]{BuNa}. Let $\text{FPdim}(\C)=p^mn$ with $(p,n)=1$, and $\D_e=\C(H_1,\eta_1)\boxtimes\C(H_2,\eta_2)$, where $(H_1,\eta_1)$ and $(H_2,\eta_2)$ are metric groups of orders $p^{m-2j}$ and $\frac{n}{t^2}$, respectively. Note that $m-2j\geq2$, since for any non-invertible simple object $X$ of $\D$, we have $X\otimes X^*\in\D_\text{ad}\subseteq\D_e$,  $p^2|\text{FPdim}(\D_\text{ad})|\text{FPdim}(\D_e)$ \cite[Corollary 5.3]{GN}.

Notice that  the proof of \cite[Lemma 6.3]{Na3} (indeed, one just need to replace $2$ by  the prime $p$) shows that the  universal grading group $U_\D$ of $\D$ is  an abelian group. Meanwhile, $\D$ is a $G$-crossed braided fusion category with a faithful $G$-grading  \cite[Proposition 4.56]{DrGNO2}, and it follows from \cite[Corollary 3.10]{GN} that there is a surjective group homomorphism from $U_\D$ to  $G$, thus $G$ is also an abelian group. Assume that $G=G_1\times G_2$, where $|G_1|=p^j$  and $|G_2|=t$, respectively.

As $G$ is an abelian group, we deduce  from \cite[Theorem 1, section 4]{CGPW} that there exist  a $G_2$-crossed braided fusion category $\widetilde{\D_e}$ with trivial component $\D_e$ and a $G_1$-crossed braided fusion category $\widetilde{\B}$ with trivial component  $\B:=\widetilde{\D_e}^{G_2}$
such that we have a braided tensor equivalence $\C\cong \widetilde{\B}^{G_1}$. Since  $\D_e=\C_G^0$ is a non-degenerate fusion category,   $\B$ is also a non-degenerate fusion category by
\cite[Proposition 4.56]{DrGNO2}. In addition, the same argument of Theorem \ref{tannpowerp} implies
\begin{align*}
\B\supseteq \D_e^{G_2}=\C(H_1,\eta_1)^{G_2}\vee\C(H_2,\eta_2)^{G_2},
\end{align*}
since simple objects of $\C$ have Frobenius-Perron dimensions $1$ or $p$, and $\C$ is an equivariantization of $\widetilde{\B}$,   simple objects of  both $\widetilde{\B}$ and $\B$  have Frobenius-Perron dimensions $1$ or $p$ by \cite[Corollary 2.2]{BuNa}. Therefore, \cite[Proposition 4.26]{DrGNO2} and \cite[Lemma 10.7]{Kir} mean that $G_2$ acts trivially on the pointed non-degenerate fusion category $\C(H_1,\eta_1)$. Consequently, $\B\supseteq \C(H_1,\eta_1)\boxtimes \text{Rep}(G_2)$, as orders of $G_2$ and $H_1$ are coprime, and $H^i(G_2,H_1)=0$ for all non-negative integers $i$. So \cite[Theorem 3.13]{DrGNO2} shows that $\B\cong\C(H_1,\eta_1)\boxtimes\A$ for some non-degenerate fusion category $\A$, moreover, we have
\begin{align*}
\text{FPdim}(\A)=\frac{\text{FPdim}(\B)}{|H_1|}=\frac{|G_2|^2\text{FPdim}(\D_e)}{|H_1|}=n.
\end{align*}

We claim that $\A$ is pointed. In fact, note that $p\nmid \text{FPdim}(\A)$ and $\frac{\text{FPdim}(\A)}{\text{FPdim}(X)^2}$ is an algebraic integer for any simple object $X$ of $\A$ \cite[Theorem 2.11]{ENO3}, then the assumption on simple objects of $\C$ implies that $\A$ must  be a pointed fusion category.  Therefore, $\B$ is a pointed non-degenerate fusion category. In the last, we consider the $G_1$-crossed braided fusion category $\widetilde{\B}$, whose trivial component $\B$ is   pointed.  As we have done in Theorem \ref{tannpowerp} or Corollary \ref{generalcoro},
 \cite[Lemma 10.7]{Kir} also shows that $G_1$ also acts trivially on the pointed non-degenerate fusion subcategory $\A$.

Note that $\A^{G_1}\subseteq\B^{G_1}\subseteq\widetilde{\B}^{G_1}\cong\C$,    $\text{FPdim}(\A)=n$ and $(n,|G_1|)=1$. Hence, the previous argument (same as that of Theorem \ref{tannpowerp}) also shows that $\C$ contains $\A^{G_1}\cong\A\boxtimes \text{Rep}(G_1)$ as a braided fusion subcategory, which implies that $\C\cong\A\boxtimes\A_\C'$ as braided fusion category by
\cite[Theorem 3.13]{DrGNO2}. Hence, $\A_\C'$ is a non-degenerate fusion category of Frobenius-Perron dimension $p^m$. Therefore, $\C$ is nilpotent and group-theoretical by \cite[Corollary 6.8]{DrGNO1}.
\end{proof}

Let $G$ be a finite group. Recall that  the Ito-Michler Theorem of finite groups states that $G$ has an abelian normal $p$-subgroup if and only if $p$ can't divide $\dim_\K(V)$ for any irreducible representation $V$ of $G$, see \cite[Theorem 5.4]{Michler} for details. With the help of  the Ito-Michler Theorem, we can   prove the  following corollary  by using the same method of Theorem \ref{maintheorem}, which also strengthens the conclusion of Theorem \ref{maintheorem}.
\begin{coro}\label{CoroPowerP}
Let $p$ be an arbitrary prime. Let $\C$ be an integral non-degenerate fusion category such that  $\text{FPdim}(X)$ is a power of $p$ for    all simple objects $X$ of $\C$. Then $\C$ is a group-theoretical fusion category.
\end{coro}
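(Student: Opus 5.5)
We follow the proof of Theorem \ref{maintheorem}, using the Ito--Michler Theorem to replace the facts that there forced $G$ to be abelian. We may assume $\C$ is not pointed. By Proposition \ref{tannakiandim}, a maximal Tannakian subcategory $\E=\text{Rep}(G)$ has $p\mid |G|$. Write $\D:=\C_G=\oplus_{g\in G}\D_g$ with $\D_e=\C_G^0$. Since $\C$ is integral with prime-power simple dimensions, it is solvable \cite[Theorem 7.2]{Na2}, hence weakly group-theoretical. Therefore $\D_e$ is pointed non-degenerate by \cite[Theorem 1.1]{Na3}. By \cite[Corollary 2.2]{BuNa}, the simple objects of $\D$ also have dimensions that are powers of $p$. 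If $|G|$ is a power of $p$, we are done by Theorem \ref{tannpowerp}. If $\D$ is pointed, we are done by \cite[Theorem 7.2]{NNW}. So assume $|G|=p^jt$ with $t>1$ and $(p,t)=1$.

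\textbf{Key step: the structure of $G$.} Since $\text{Rep}(G)\subseteq\C$, every irreducible representation of $G$ has dimension a power of $p$. Hence for every prime $q\neq p$, $q$ divides no irreducible character degree. By the Ito--Michler Theorem \cite[Theorem 5.4]{Michler}, $G$ has a normal abelian Sylow $q$-subgroup for each such $q$. Their product is a normal abelian Hall $p'$-subgroup $G_2$ of order $t$. By Schur--Zassenhaus, $G\cong G_2\rtimes G_1$ with $|G_1|=p^j$. This replaces the decomposition $G=G_1\times G_2$ of Theorem \ref{maintheorem}, and it is the main obstacle. The gauging step of \cite[Theorem 1, section 4]{CGPW} was used there for abelian $G$. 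We need the analogous two-step de-equivariantization for the normal subgroup $G_2$. Concretely, $\text{Rep}(G/G_2)=\text{Rep}(G_1)\subseteq\text{Rep}(G)$ is a Tannakian subcategory. De-equivariantizing $\C$ by $\text{Rep}(G_1)$ gives a $G_1$-crossed braided category $\widetilde{\B}$ with $\C\cong\widetilde{\B}^{G_1}$. Its trivial component is $\B=(\text{Rep}(G_1))'_{G_1}$, which is non-degenerate and contains $\text{Rep}(G_2)$ as a maximal Tannakian subcategory with core $\D_e$. So we avoid needing CGPW for nonabelian $G$ by de-equivariantizing in stages, which works for any normal subgroup.

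\textbf{Splitting off the $p'$-part.} Write $\text{FPdim}(\C)=p^mn$ with $(p,n)=1$, and $\D_e=\C(H_1,\eta_1)\boxtimes\C(H_2,\eta_2)$ with $|H_1|$ a power of $p$ and $(|H_2|,p)=1$. As in Theorem \ref{tannpowerp}, $\text{rank}(\D_g)=|\Q(\D_e)^g|$ by \cite[Lemma 10.7]{Kir}. Components of $\D$ over $g\in G_2$ have rank divisible by $|H_1|$, via \cite[Lemma 4.1]{OY} and dimension counting. Hence $G_2$ fixes $\C(H_1,\eta_1)$ pointwise, and by coprimality $\C(H_1,\eta_1)^{G_2}\cong\C(H_1,\eta_1)\boxtimes\text{Rep}(G_2)$. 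Then \cite[Theorem 3.13]{DrGNO2} gives $\B\cong\C(H_1,\eta_1)\boxtimes\A$ with $\text{FPdim}(\A)=n$ coprime to $p$. By \cite[Theorem 2.11]{ENO3}, every simple object of $\A$ has dimension dividing into $n$, yet it is a power of $p$, so $\A$ is pointed.

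\textbf{Conclusion.} Now apply the same rank/fixed-point argument to $\widetilde{\B}$ and $G_1$. Since $(n,|G_1|)=1$, $G_1$ acts trivially on $\A$ and $\A^{G_1}\cong\A\boxtimes\text{Rep}(G_1)\subseteq\C$. Hence $\C\cong\A\boxtimes\A'_\C$ with $\text{FPdim}(\A'_\C)=p^m$. A non-degenerate category of prime power dimension is nilpotent and group-theoretical by \cite[Corollary 6.8]{DrGNO1}, and $\A$ is pointed. Therefore $\C$ is group-theoretical.

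Besides the staging issue above, the delicate point I expect is verifying the divisibility of $\text{rank}(\D_g)$ for non-central or non-$p'$ elements $g$. Here I would use the transitivity of the $G(\D_e)$-action from \cite[Lemma 4.1]{OY} and the fact that $\text{FPdim}(\D_g)=\text{FPdim}(\D_e)$.
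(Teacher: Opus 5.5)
Your architecture matches the paper's. Ito--Michler gives $G\cong G_2\rtimes G_1$ with $G_2$ a normal abelian Hall $p'$-subgroup; you then reduce in two stages, split the $p$-part $\C(H_1,\eta_1)$ off $\B$, show the complement $\A$ is pointed via \cite[Theorem 2.11]{ENO3}, and split $\A$ off $\C$. Your de-equivariantization in stages through the Tannakian subcategory $\Rep(G/G_2)\subseteq\Rep(G)$ is a legitimate and cleaner substitute for the paper's unproved claim that \cite[Theorem 1, section 4]{CGPW} extends to semidirect products.

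\textbf{The gap} is the step where you deduce that $G_2$ fixes $\C(H_1,\eta_1)$ pointwise. For $g\in G_2$, transitivity \cite[Lemma 4.1]{OY} and $\text{FPdim}(\D_g)=\text{FPdim}(\D_e)$ only give $\text{rank}(\D_g)=|H_1||H_2|/p^{2a}$, where $p^a$ is the common dimension of the simple objects of $\D_g$. Because $p^a$ is a power of $p$, this forces divisibility by the $p'$-part $|H_2|$. That is why the count works in Theorem \ref{tannpowerp}, and in your final step for the $p$-group $G_1$ acting on the $p'$-category $\A$. But it says nothing about $|H_1|$ unless you already know $a=0$, which is essentially what you want to prove. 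Nothing intrinsic to $\D$ forces $a=0$. For $p=3$, the $\Z_2$-crossed extension of $\Y(\text{Vec}_{\Z_3})$ by charge conjugation has all simple dimensions in $\{1,3\}$. Yet its twisted component consists of a single simple object of dimension $3$, so its rank is $1$, not divisible by $9$. Your closing remark therefore misplaces the difficulty: the rank count fails precisely for the $p'$-elements of $G_2$, not for non-$p'$ ones.

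\textbf{The fix.} What excludes such configurations is the dimension hypothesis on $\C$ carried over to the equivariantization, and this is what the paper uses. Since $\C\cong\widetilde{\B}^{G_1}$, \cite[Corollary 2.2]{BuNa} shows every simple object of $\B$ has dimension a power of $p$. In particular this holds for every simple object of $\D_e^{G_2}\cong\Rep(G_2)'_\B\subseteq\B$. A simple object of $\D_e^{G_2}$ lying over $Y\in\Q(\D_e)$ has dimension divisible by the orbit length $[G_2:\text{Stab}_{G_2}(Y)]$. That length divides $|G_2|$ and so is prime to $p$. Hence every orbit is a singleton, and $G_2$ fixes every simple object of $\D_e$. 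Coprimality, $H^2(G_2,H_1)=0$, then makes the categorical action on $\C(H_1,\eta_1)$ trivial, giving $\C(H_1,\eta_1)^{G_2}\cong\C(H_1,\eta_1)\boxtimes\Rep(G_2)$. A posteriori, \cite[Lemma 10.7]{Kir} shows the components $\D_g$, $g\in G_2$, are pointed. With this orbit-length argument replacing the rank count in that step, the rest of your proof goes through as written.
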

\begin{proof}(\textit{sketched}) Assume  $\C$ is not a pointed fusion category, otherwise it is trivial. We know that $\C$ contains a maximal non-trivial Tannakian subcategory $\E=\text{Rep}(G)$ by Proposition \ref{tannakiandim}.
If $\C_G$ is pointed, then $\C$ is group-theoretical \cite[Theorem 7.2]{NNW}.
Assume $\D:=\C_G=\oplus_{g\in G}\D_g$ is not pointed  below. Note that $\C$ is always weakly group-theoretical \cite[Proposition 5.3]{Na3}, same as Theorem \ref{tannpowerp}, we obtain that $\D_e$ is a pointed non-degenerate fusion category \cite[Theorem 1.1]{Na3}. As a fusion subcategory of $\C$, the Frobenius-Perron dimensions of simple objects of $\E$ are also powers of $p$, then the Ito-Michler Theorem of finite groups \cite[Theorem 5.4]{Michler} states that $G$ is isomorphic to the semidirect product $G_1\rtimes G_2$, where $G_1$ is an abelian group such that $(|G_1|,p)=1$ and $G_2$ is a $p$-group.

Notice that \cite[Theorem 1, section 4]{CGPW} still works for semidirect product of finite groups, therefore there exist  a $G_1$-crossed braided fusion category $\widetilde{\D_e}$ with trivial component $\D_e$ and a $G_2$-crossed braided fusion category $\widetilde{\B}$ with trivial component  $\B:=\widetilde{\D_e}^{G_1}$
such that we have a braided tensor equivalence $\C\cong \widetilde{\B}^{G_2}$, then the rest of proof is same of  that of Theorem \ref{maintheorem}.
\end{proof}

Let   $p$ be a prime, and let $\C$ be a slightly degenerate fusion category such that $\text{FPdim}(X)\in {\{p^j|j\geq0}\}$ for all simple objects $X\in\Q(\C)$. Then $\C$ is solvable by \cite[Theorem 7.2]{Na2}. In addition, we can assume that $\C$ does not contain any non-trivial non-degenerate fusion subcategories. Otherwise, $\C\cong\D\boxtimes\B$, where $\B$ is non-degenerate and $\D$ is slightly degenerate. Theorem \ref{maintheorem} shows that $\B$ is group-theoretical, hence we can replace $\C$ by $\D$ if $\D$ is not pointed.

A non-degenerate fusion category $\A$ is called a minimal extension of a slightly degenerate fusion category $\C$, if $\C\subseteq\A$ and $\text{FPdim}(\A)=2\text{FPdim}(\C)$ \cite[Conjecture 5.2]{Mu}.

Let $\C$ be a slightly degenerate fusion category such that $\text{FPdim}(X)\in {\{p^j|j\geq0}\}$ for all simple objects $X\in\Q(\C)$, then $\C$ admits a minimal extension $\A$ by
 \cite[Theorem 3.5]{OY}, which can also assumed to be an integral fusion category
by  \cite[Remark 3.6]{OY}. Indeed, one can show that there exists a minimal extension $\A$ of $\C$ such that $\A\subseteq\Y(\C_G)$ \cite[Theorem 3.5]{OY}, where $\Rep(G)$ is an arbitrary  maximal Tannakian subcategory of $\C$.

As a faithful $\Z_2$-extension of $\C$ \cite[Proposition 3.5.3]{EGNO}, $\A$ is also a solvable fusion category by \cite[Proposition 4.5]{ENO3}.
Since $\C_G^0$ is a slightly degenerate integral weakly anisotropic fusion category \cite[Corollary 5.19]{DrGNO2}, $\C_G^0$ must be pointed by \cite[Theorem 1.1]{Na3}, hence  $\text{sVec}\boxtimes\D^\text{rev}\cong\C_G^0\subseteq\A_G^0$, where $\D$ is a pointed non-degenerate fusion category,
\cite[Theorem 1.3]{ENO3} says that
  \begin{align*}
\Y(\text{sVec})\boxtimes\D\boxtimes \D^\text{rev}\cong\Y(\text{sVec})\boxtimes \Y(\D^\text{rev})\cong \Y(\text{sVec}\boxtimes\D^\text{rev})\cong\Y(\C_G)_G^0\cong\D\boxtimes\A_G^0,
  \end{align*}
  thus,  $\A_G^0\cong\D^\text{rev}\boxtimes \Y(\text{sVec})$ as pointed braided fusion category.
\begin{ques}\label{sldegpowerp}
Are $\C$ and $\A$ group-theoretical fusion categories?
\end{ques}

\section*{Acknowledgements}
The author is supported by NSFC (no.12101541), NSF of Jiangsu Province (no.BK20210785), and Natural Science Foundation of Jiangsu Higher Institutions of China (no.21KJB110006). The author thanks the anonymous referee for   numerous suggestions that helped improve this paper substantially.  Part of this paper was written during a visit of the author at University of Oregon supported by China Scholarship Council (no.201806140143). The author is   grateful to   V. Ostrik for  weekly insightful conversations. He is also grateful to   J. Plavnik for inviting him to Indiana University in September 2019, he  appreciates the Department of Mathematics of both University of Oregon and Indiana University  for their warm hospitality.

\author{Zhiqiang Yu\\ \thanks{Email:\,zhiqyumath@yzu.edu.cn}
\\{\small School  of Mathematical Science,  Yangzhou University,
Yangzhou 225002, China}
}

\end{document}